\newtheorem{theorem}{Theorem}
\newtheorem{claim}{Claim}
\newtheorem{question}[theorem]{Question}
\newtheorem{conjecture}[theorem]{Conjecture}
\newtheorem{corollary}[theorem]{Corollary}
\newtheorem{claim2}{Claim}
\theoremstyle{definition}
\theoremstyle{remark}
\newcommand{\cP}{\mathcal{P}}
\newcommand{\cS}{\mathcal{S}}
\newcommand{\B}{\mathcal{B}}
\begin{document}
\newcommand{\Addresses}{
\bigskip
\footnotesize

\medskip

\noindent Tom\'a\v s~Fl\'idr, \textsc{Peterhouse, University of Cambridge, CB2 1RD, UK.}\par\noindent\nopagebreak\textit{Email address: }\texttt{tf388@cam.ac.uk}

\medskip

\noindent Maria-Romina~Ivan, \textsc{Department of Pure Mathematics and Mathematical Statistics, Centre for Mathematical Sciences, Wilberforce Road, Cambridge, CB3 0WB, UK.}\par\noindent\nopagebreak\textit{Email addresses: }\texttt{mri25@dpmms.cam.ac.uk}

\medskip

\noindent Sean~Jaffe, \textsc{Department of Pure Mathematics and Mathematical Statistics, Centre for Mathematical Sciences, Wilberforce Road, Cambridge, CB3 0WB, UK.}\par\noindent\nopagebreak\textit{Email address: }\texttt{scj47@cam.ac.uk}}

\pagestyle{fancy}
\fancyhf{}
\fancyhead [LE, RO] {\thepage}
\fancyhead [CE] {TOM\'A\v S FL\'IDR, MARIA-ROMINA IVAN AND SEAN JAFFE}
\fancyhead [CO] {OPTIMAL EMBEDDINGS OF POSETS IN HYPERCUBES}
\renewcommand{\headrulewidth}{0pt}
\renewcommand{\l}{\rule{6em}{1pt}\ }
\title{\Large{\textbf{OPTIMAL EMBEDDINGS OF POSETS IN HYPERCUBES}}}
\author{TOM\'A\v S FL\'IDR, MARIA-ROMINA IVAN AND SEAN JAFFE}
\date{ }
\maketitle
\begin{abstract}
Given a finite poset $\mathcal P$, the hypercube-height, denoted by $h^*(\mathcal P)$, is defined to be the minimum $h$ such that there exists a natural number $n$ for which the subsets of $[n]$ of size at most $h$ contain an induced copy of $\mathcal P$. The hypercube-width, denoted by $w^*(\mathcal P)$, is the smallest $w$ such that the subsets of $[w]$ of size at most $h^*(\mathcal P)$ contain an induced copy of $\mathcal P$. In other words, $h^*(\mathcal P)$ asks how `low' can a poset be embedded, and $w^*(\mathcal P)$ asks for the first hypercube in which such an `optimal' embedding occurs.

These notions were introduced by Bastide, Groenland, Ivan and Johnston in connection to upper bounds for the poset saturation numbers. While it is not hard to see that $h^*(\mathcal P)\leq |\mathcal P|-1$ (and this bound can be tight), the hypercube-width has proved to be much more elusive. It was shown by the authors mentioned above that $w^*(\mathcal P)\leq|\mathcal P|^2/4$, but they conjectured that in fact $w^*(\mathcal P)\leq |\mathcal P|$ for any finite poset $\mathcal P$.

In this paper we prove this conjecture. The proof uses Hall's theorem for bipartite graphs as a precision tool for modifying an existing copy of our poset.

\end{abstract}
\section{Introduction}
A \textit{poset} is short for a partially ordered set. In this paper all posets are finite. Although posets are generally abstract notions, their natural arena is in fact the power set equipped with the partial order given by set inclusion. Indeed, given a poset $\mathcal P$ with elements $\{p_1,p_2\dots,p_t\}$, and partial order $\preceq$, we can \textit{realise} $\mathcal P$ inside the hypercube $Q_t$ via the sets $A_i=\{j:p_j\preceq p_i\}$ for all $i\in[t]$. Of course, this embedding is not unique, not even inside $Q_t$. For example, if $\mathcal P$ is the antichain of size 6, then the above gives an embedding in $Q_6$ where all the elements of the poset are singletons, but we can also take all pairs of $\{1,2,3,4\}$ and obtain another copy of the antichain of size 6 in $Q_6$. One difference between these two embeddings is that the first sits `lower' in the hypercube than the other. So in general,  how can we 'optimally fit' a given poset $\mathcal P$ into a hypercube?

This question was investigated by Bastide, Groenland, Ivan and Johnston \cite{polynomial} in order to achieve a general upper bound for the induced saturation numbers for posets. As such, for a given poset $\mathcal P$, they have introduced the notions of \textit{hypercube-height} and \textit{hypercube-width}. To define them, we first need some standard notation. Given two integers $h\leq w$, we denote by $\binom{[w]}{\leq h}$ the induced subposet of the hypercube $Q_w$ consisting  of all the sets of size at most $h$, i.e the poset $Q_w$ restricted to the first $h + 1$ layers, $0,1,\dots,h$. 

For a poset $\mathcal P$, we define the \textit{hypercube-height} $h^*(\mathcal P)$ to be the minimum $h^* \in \mathbb{N}$ for which there exists $n \in \mathbb{N}$ such that $\binom{[n]}{\leq h^*}$ contains an induced copy of $\mathcal{P}$.

For a poset $\mathcal P$, we define the \textit{hypercube-width} $w^*(\mathcal P)$ to be the minimum $w^* \in \mathbb{N}$ such that there exists an induced copy of $\mathcal P$ in $\binom{[w^*]}{\leq h^*(\mathcal P)}$.

It is important to note that the two notions defined above are different from the usual height and width of $\mathcal P$, that is, from the size of the biggest chain and antichain, respectively. One example is the butterfly poset (two maximal elements both bigger than two minimal elements). The height of the butterfly is 2, but its hypercube-height is 3 since the first 3 layers of any hypercube are butterfly-free. Similarly, the width and the hypercube-width can be very different. For example, if $\mathcal P$ is a chain of size $k$, then its width is 1, but its hypercube-width is $k-1$.

But how big can the hypercube-height and hypercube-width be? The `canonical' embedding mentioned in the beginning immediately gives us that $h^*(\mathcal P)\leq |\mathcal P|$. In fact, one can easily modify this embedding, as shown in \cite{polynomial}, to obtain $h^*(\mathcal P)\leq|\mathcal P|-1$, which can even be tight, e.g., when $\mathcal P$ is a chain. What about the hypercube-width?

This notion is much harder to grasp as it is defined via the hypercube-height, which although we know how to upper bound well, we do not yet understand how to structurally tie it to the poset. Hypercube-width is not even a monotone property. For example, the antichain of size $\binom{k}{k/2}$ has hypercube-height 1 and hypercube-width $\binom{k}{k/2}$, but adding a chain of length $k/2$ which is less than all elements of the antichain gives a poset with hypercube-height $k/2$ and hypercube-width $k$. For any poset that one writes down, we always seem to have $w^*(\mathcal P)\leq |\mathcal P|$, which led to the following.

\begin{conjecture}[Conjecture 9 in \cite{polynomial}]
\label{mainconjecture}
For any finite poset $\mathcal P$ we have $w^*(\mathcal P)\leq |\mathcal P|$.
\end{conjecture}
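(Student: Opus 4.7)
The plan is to start with the induced copy $A_1,\dots,A_t$ of $\mathcal P$ inside $\binom{[n]}{\le h}$ (where $h=h^*(\mathcal P)$) guaranteed by the definition of $h^*(\mathcal P)$, and to reduce its ground set to size $t$. I would pass to the dual language of up-sets: for each $x\in[n]$, let $S_x=\{i:x\in A_i\}$, which is an up-set of $\mathcal P$ because $A_i\subseteq A_j$ whenever $p_i\preceq p_j$. Setting $\mathcal F=\{S_x:x\in[n]\}$ (as a multiset), the induced-copy property becomes: for every non-relation $(i,j)$ of $\mathcal P$, some $U\in\mathcal F$ contains $i$ but not $j$; and the layer bound $|A_i|\le h$ becomes: each $i\in[t]$ is in at most $h$ members of $\mathcal F$. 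The goal is to exhibit such an $\mathcal F$ with $|\mathcal F|\le t$, from which an induced copy in $\binom{[t]}{\le h}$ is read off by reversing the encoding.

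I would proceed by downward induction on $|\mathcal F|$, showing that if $|\mathcal F|>t$ then we can strictly decrease $|\mathcal F|$ while retaining both conditions. The easy case is when some $U\in\mathcal F$ is \emph{redundant}---every non-relation $U$ witnesses is also witnessed by another member---in which case we just drop $U$. The interesting case is when every member of $\mathcal F$ is \emph{essential}, each uniquely witnessing a private non-relation $(i_U,j_U)$; here no direct deletion works, and one must instead replace some $U$ by a different up-set $U'$ engineered so that after the swap another member becomes redundant and can be removed, for a net decrease of one.

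The construction of this swap is where Hall's theorem enters as a precision tool. I would build an auxiliary bipartite graph with the poset elements $[t]$ on one side and (a suitable subcollection of) the essential members of $\mathcal F$ on the other, with edges recording membership; the layer condition caps the degrees on the $[t]$ side by $h$, and the assignment $U\mapsto(i_U,j_U)$ coming from essentiality imposes the needed rigidity on the $\mathcal F$ side. Hall's condition---provable using $|\mathcal F|>t$ together with the layer bound---should then yield a matching dictating which $U$ to replace and with which $U'$. The main obstacle is pinning down this bipartite graph so that (a) Hall's condition is forced by $|\mathcal F|>t$ and (b) the resulting matching encodes a genuinely valid swap, meaning $U'$ still witnesses $(i_U,j_U)$, the chosen victim becomes redundant, and no element's degree exceeds $h$ afterwards. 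Because up-sets of $\mathcal P$ are closed under $\preceq$, any local change ripples through every other up-set via shared elements, so identifying precisely which swap works is the delicate step; once the right bipartite encoding is in hand, the rest is verification.
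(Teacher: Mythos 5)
Your high-level plan points in the right direction---dualizing ground elements to up-sets, recording the layer bound as a degree bound, and using Hall's theorem to reorganize the embedding---but the proposal stops exactly where the proof has to begin. You explicitly defer ``pinning down this bipartite graph'' and ``identifying precisely which swap works,'' and that is not a verification detail: it is the entire content of the argument. Moreover, the iterative shape you propose (delete a redundant up-set, or else perform one local swap that manufactures a redundancy, decreasing $|\mathcal F|$ by one each time) is not how the argument goes through, and it is not clear that such a single swap always exists when every member of $\mathcal F$ is essential; the obstruction you yourself note---that changing one up-set ripples through all the others---is real.

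The missing idea is an extremal choice made once, globally, followed by a one-shot rewrite rather than a step-by-step reduction. Working with the sets $A\in\mathcal P$ directly: take $\mathcal X\subseteq\mathcal P$ of maximum size with $\left|\bigcup_{A\in\mathcal X}A\right|<|\mathcal X|$ (and $\mathcal X=\emptyset$ if no such subfamily exists), and set $\mathcal Y=\mathcal P\setminus\mathcal X$, $\mathcal G=\bigcup_{A\in\mathcal X}A$, $\mathcal G'=\left(\bigcup_{A\in\mathcal P}A\right)\setminus\mathcal G$. The Hall graph is between $\mathcal Y$ and $\mathcal G'$ with membership edges, and Hall's condition is \emph{not} forced by $|\mathcal F|>t$ as you hope; it is forced by the maximality of $\mathcal X$, since a violating subset $\mathcal S\subseteq\mathcal Y$ could be adjoined to $\mathcal X$ to produce a larger deficient subfamily. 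This yields an injection $f:\mathcal Y\to\mathcal G'$ with $f(A)\in A$, and the entire embedding is then replaced at once by $g(A)=(A\cap\mathcal G)\cup\{f(B):B\in\mathcal Y,\ B\subseteq A\}$. In your dual language this keeps the up-sets indexed by $\mathcal G$ and replaces every other up-set by a principal up-set $\{i:B\subseteq A_i\}$ for $B\in\mathcal Y$. Since $f(B)\in B$ gives $g(A)\subseteq A$, the height bound is automatic; the new ground set has size at most $|\mathcal G|+|\mathcal Y|\leq|\mathcal X|+|\mathcal Y|=|\mathcal P|$; and the genuine verification is that $g$ is an induced embedding, which uses the injectivity of $f$ and the disjointness of its image from $\mathcal G$. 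Without the definition of $\mathcal X$ and an explicit replacement rule, your proposal is a research plan rather than a proof.
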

It was shown in $\cite{polynomial}$ that $w^*(\mathcal P)\leq |\mathcal P|h^*(\mathcal P)$, and also that $w^*(\mathcal P)\leq |\mathcal P|^2/4$. Unfortunately, both lead, in general, to an upper bound that is quadratic in $|\mathcal P|$.

In this paper we prove Conjecture~\ref{mainconjecture}. We first focus on two-layered posets, and present in Section 2 a proof that is tailored for this type of posets -- this proof is guided by the natural intuition that some minimal elements of such an `optimal' embedding may be taken to be singletons. We emphasise that this proof only works for two-layered posets. 

In Section 3 we prove the conjecture in full generality. One may completely skip Section 2 as the proofs are independent -- we included it as it is a more `hands on' proof which can serve as a conceptual `warm-up' for the more general proof. Both results have at heart Hall's theorem for bipartite graphs, which helps us replace (in some sense) some sets of an already existing copy of our poset by singletons. For completeness, we end the introduction with Hall's theorem.
\begin{theorem}[\cite{Hall}]
Let $G$ be a finite bipartite graph with bipartite sets $X$ and $Y$ and edge set $E$. Two edges are disjoint if they do not share any vertex. An $X$-matching is a set of disjoint edges that covers every vertex in $X$. For a subset $W$ of $X$, we denote by $N(W)$ the neighbourhood of $W$. Then, there exists an $X$-matching if and only if for every subset $W$ of $X$ we have $|W|\leq |N(W)|$.
\end{theorem}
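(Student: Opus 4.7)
The plan is to prove Hall's theorem by induction on $|X|$. The necessity of the condition is immediate: any $X$-matching sends each subset $W\subseteq X$ injectively into $N(W)$, forcing $|W|\leq |N(W)|$. For sufficiency, the base case $|X|=1$ is trivial, since Hall's condition then provides at least one neighbour for the single vertex. For the inductive step, I would split into two cases based on whether Hall's condition holds with strict slack on all proper subsets.

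In the first case, assume every nonempty proper subset $W\subsetneq X$ satisfies $|N(W)|\geq |W|+1$. Pick any $x\in X$, any neighbour $y\in N(\{x\})$ (which exists by Hall applied to $\{x\}$), add the edge $xy$ to our prospective matching, and delete both vertices from the graph. For any $W'\subseteq X\setminus\{x\}$, the neighbourhood $N'(W')$ in the reduced graph contains $N(W')\setminus\{y\}$, hence $|N'(W')|\geq |N(W')|-1\geq |W'|$. Thus Hall's condition persists, and induction yields an $(X\setminus\{x\})$-matching that combines with $xy$ to give the desired $X$-matching.

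In the second case, there is a nonempty proper subset $W\subsetneq X$ with $|N(W)|=|W|$, a so-called critical set. I would apply induction separately to the bipartite subgraph $G_1$ on $(W,N(W))$ and to the bipartite subgraph $G_2$ on $(X\setminus W,\,Y\setminus N(W))$. For $G_1$, Hall's condition is inherited directly from $G$. For $G_2$, given $S\subseteq X\setminus W$, the neighbourhoods in $G$ satisfy $N(W\cup S)=N(W)\cup N_{G_2}(S)$ (since any neighbour of $S$ lying in $N(W)$ is absorbed, while the remaining ones live in $Y\setminus N(W)$), so
\[
|W|+|N_{G_2}(S)|=|N(W\cup S)|\geq |W\cup S|=|W|+|S|,
\]
giving $|N_{G_2}(S)|\geq |S|$. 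Induction produces matchings saturating $W$ and $X\setminus W$ respectively, which are disjoint because they live on disjoint sides of $Y$, and their union is an $X$-matching.

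The main obstacle I anticipate is the bookkeeping in the critical-set case, particularly verifying that the neighbourhood identity $N(W\cup S)=N(W)\cup N_{G_2}(S)$ really isolates the two subproblems so that Hall's condition transfers cleanly to $G_2$. Once that is handled, the induction goes through without issue, and no further structural input about the graph is needed.
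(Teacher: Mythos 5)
Your argument is the standard inductive proof of Hall's theorem (induction on $|X|$, splitting on whether some nonempty proper subset is critical), and it is correct: the slack case and the critical-set decomposition are both handled properly, and the key identity $N(W\cup S)=N(W)\cup N_{G_2}(S)$ as a disjoint union does exactly the isolation you need. Note, however, that the paper does not prove this statement at all --- it is quoted as a known result with a citation and used as a black box --- so there is no proof in the paper to compare yours against; your write-up simply supplies the classical argument that the authors take for granted.
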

\section{Two-layered posets}
In this section we are looking at two-layered posets. We define a \textit{two-layered poset} to be a poset in which every element is either a maximal element, or a minimal element, and not both. We show that the hypercube-width of such a poset is always at most the size of the poset. Our proof uses Hall's theorem for bipartite graphs in order to modify a given copy of the poset in a hypercube, by replacing some minimal elements with singletons, and also modifying accordingly the maximal elements above the minimal elements we have modified. 

We mention that this approach only seems to work for two-layered posets and it is somewhat different from the general proof presented in Section 3. We include it here for a smooth and intuitive transition to the general proof, as well as for a broader understanding of `optimal' embeddings for two-layered posets.
\begin{theorem}
Let $\mathcal P$ be a two-layered poset. Then $w^*(\mathcal P)\leq |\mathcal P|$.
\end{theorem}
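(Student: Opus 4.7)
Write $\mathcal{P}$ with minimal elements $p_1,\ldots,p_a$ and maximal elements $q_1,\ldots,q_b$, so $|\mathcal{P}|=a+b$, and set $h=h^*(\mathcal{P})$. I would start with any minimum-height embedding $(A_i)_{i=1}^a,(B_j)_{j=1}^b$ of $\mathcal{P}$ in some $\binom{[n]}{\le h}$ (guaranteed by the definition of $h^*$) and modify it into an embedding in $\binom{[a+b]}{\le h}$ by applying Hall's theorem to turn (as many as possible of) the minimal sets $A_i$ into singletons. If $n\le a+b$ we are already done, so we may assume $n$ is large.

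Concretely, for each coordinate $x\in[n]$ record its \emph{profile} $\pi(x)=\{j:x\in B_j\}$, and for each minimal $p_i$ record its \emph{type} $\sigma_i=\{j:p_i\prec q_j\}$. Build a bipartite graph $G$ with parts $X=\{p_1,\ldots,p_a\}$ and $Y=[n]$, where $(p_i,x)$ is an edge whenever $\pi(x)=\sigma_i$. Such an edge encodes that replacing $A_i$ by the singleton $\{x\}$, with the $B_j$'s untouched, preserves the containment pattern $A_i\subseteq B_j\iff p_i\prec q_j$ required by $\mathcal{P}$. An $X$-matching in $G$ therefore yields distinct singletons $\{y_1\},\ldots,\{y_a\}$ that simultaneously replace the $A_i$'s: the new minimals are pairwise incomparable as distinct singletons, and all required containments with the $B_j$'s are inherited by construction.

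Once the minimals are singletons, I would compress the ground set to $a+b$ coordinates: the $y_i$'s account for $a$ of them, while at most $b$ further ``distinguisher'' coordinates (one per maximal, chosen from $\{a+1,\ldots,a+b\}$) suffice to keep the $B_j$'s pairwise incomparable where the common pattern of singletons alone does not already distinguish them. One must be careful with the size bound $|B_j|\le h$: when this aggressive trimming would exceed $h$ (as for the star poset, where many minimals sit beneath a single maximal), we apply Hall only to a carefully chosen subset of the minimals and keep the rest in their compact original form, which already fits within the budget $a+b$.

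The main obstacle, and the crux of the argument, is verifying Hall's condition for $G$, i.e.\ showing that for every $W\subseteq X$, the number of coordinates profile-compatible with some $p_i\in W$ is at least $|W|$. This is where the two-layered structure is decisive: profile-compatibility is defined purely against the single layer of $B_j$'s with no interference from intermediate ranks, so the antichain structure of the $A_i$'s together with a preliminary normalisation of the starting embedding (merging coordinates of identical type and discarding useless ones, and taking $n$ minimal to begin with) should force enough profile-compatible coordinates to exist. The same clean profile bookkeeping fails for posets with more than two layers, which is exactly what necessitates the separate argument of Section 3.
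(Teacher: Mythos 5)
Your high-level plan---use Hall's theorem to replace minimal elements by singletons, keep a residual set of minimals in their original form when the size budget demands it, and land in a ground set of size $a+b$---is the right shape, and it matches the spirit of the paper's argument. But the two load-bearing steps are exactly the ones you leave open, and as stated your version of the first one is not merely unproven but liable to fail.

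First, your bipartite graph joins $p_i$ to a coordinate $x$ only when the profile $\pi(x)=\{j:x\in B_j\}$ \emph{exactly equals} the type $\sigma_i=\{j:p_i\prec q_j\}$. This exactness is forced on you because you insist on leaving the maximal sets $B_j$ untouched. But nothing guarantees such coordinates exist: if $|A_i|\geq 2$, every element of $A_i$ may lie in some $B_j$ with $j\notin\sigma_i$ (the embedding only promises $A_i\not\subseteq B_j$, not that some single element of $A_i$ avoids \emph{all} such $B_j$ simultaneously), and coordinates outside $A_i$ have no reason to carry profile $\sigma_i$ either. Your proposed normalisation (merging coordinates of equal type, minimising $n$) does not manufacture coordinates of a prescribed exact profile. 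You flag this verification as ``the crux'' and then assert it ``should'' work; that is the gap. The paper sidesteps the problem entirely by using a much weaker edge relation ($x\in B$, i.e.\ membership, over a restricted ground set) and then \emph{rebuilding the maximal elements from scratch} as $C_i'=\{f(B):B\in\mathcal Y,\ B\subseteq A_i\}\cup\bigcup_{B\in\mathcal X,\ B\subseteq A_i}B$, plus a fresh distinguisher when needed---so no exact-profile condition is ever required. This rebuilding, together with the re-verification that the new family is still an induced copy and still has height at most $h^*(\mathcal P)$, is the delicate part of the proof, and it is absent from your write-up (your ``compression to $a+b$ coordinates'' would have to do the same work).

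Second, your fallback of applying Hall ``only to a carefully chosen subset of the minimals'' names the key idea without supplying it. The paper's choice is concrete and does double duty: take $\mathcal X$ to be a \emph{maximum-size} family of minimals with $\left|\bigcup_{B\in\mathcal X}B\right|<|\mathcal X|$. Maximality of $\mathcal X$ is precisely what makes Hall's condition hold automatically for the complementary family $\mathcal Y$ (any violating set $\mathcal S\subseteq\mathcal Y$ could be adjoined to $\mathcal X$, contradicting maximality), and the defining inequality of $\mathcal X$ is precisely what makes the final count close: the new ground set has size at most $\left|\bigcup_{B\in\mathcal X}B\right|+|\mathcal Y|+r\leq|\mathcal X|+|\mathcal Y|+r=|\mathcal P|$. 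Without specifying this extremal choice, neither your Hall verification nor your size bound can be completed. I would encourage you to restructure the argument around such a maximal ``deficient'' subfamily; with that in place, the exact-profile bookkeeping becomes unnecessary.
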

\begin{proof}
Suppose $\mathcal P$ is embedded in some $Q_n$ such that all sets of this embedding have size at most $h^*(\mathcal P)$. We will call this copy $\mathcal P$ for readability purposes. Let $\mathcal A=\{A_1,\dots,A_r\}$ be its maximal elements and $\mathcal B=\{{B}_1,\dots,B_k\}$ its minimal elements. If $k=1$, then the poset is $\mathcal V_r$, depicted below, for which trivially $h^*(\mathcal P)=1$, and $w^*(\mathcal P)=r=|\mathcal V_r|-1$. Therefore, we may assume that $k\geq 2$, which consequently implies that $\emptyset\notin\mathcal P$.
\begin{center}
\includegraphics[width=13em]{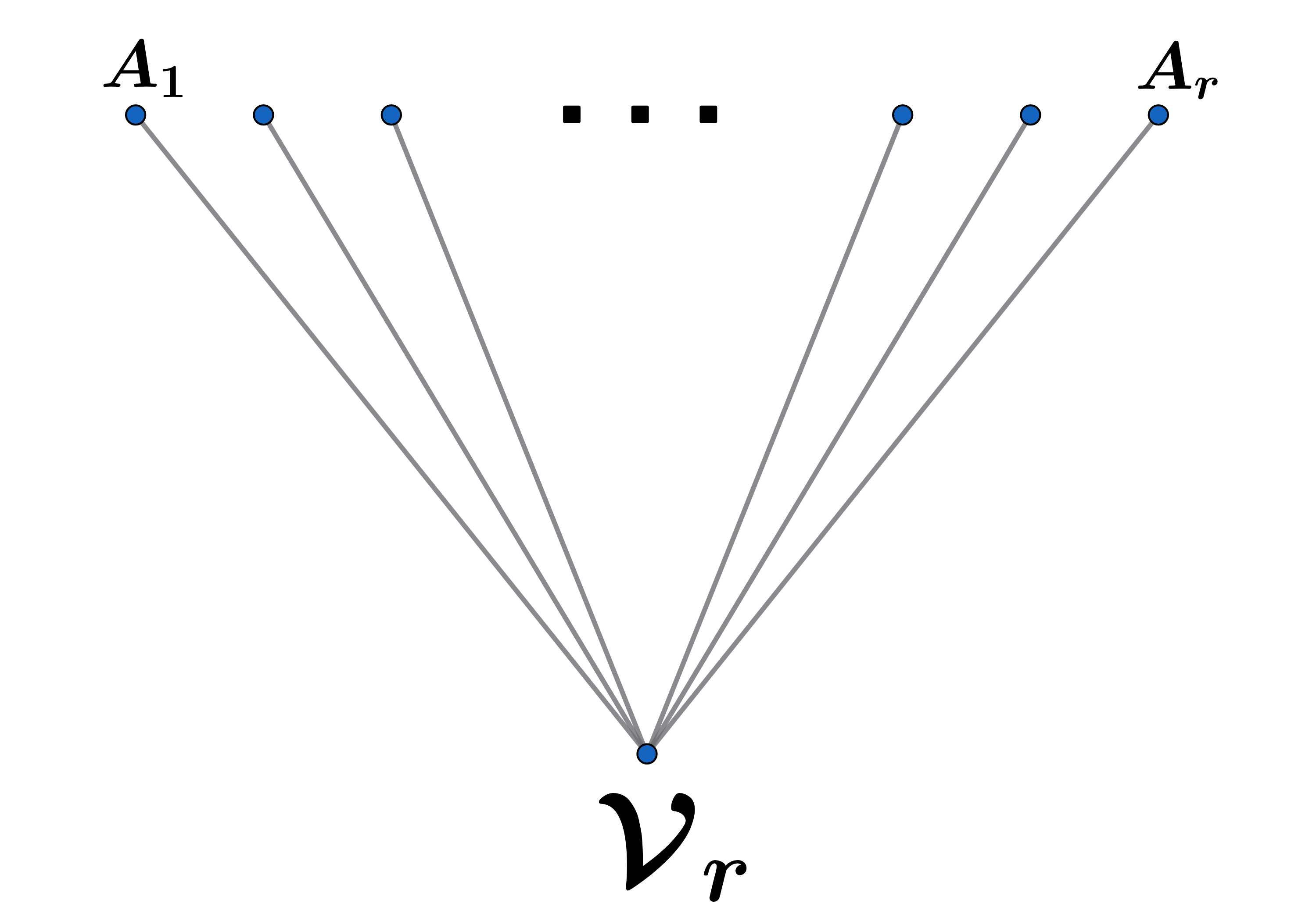}
\end{center}

Let $\mathcal G$ be all the elements of the ground set that appear in some $B_i$, i.e $\mathcal G = \bigcup_{i\in[k]}B_i$. Let $\mathcal X$ be a maximal size subset of minimal elements of $\mathcal P$ for which the size of their union is less than their number, i.e. $\mathcal X\subset\mathcal B$ such that $\left|\bigcup_{B\in\mathcal X} B\right| < |\mathcal X|$, and $|\mathcal X|$ is maximal. If no such set exists, we define $\mathcal X$ to be $\emptyset$. Let $\mathcal Y = \mathcal B\setminus \mathcal X$ and $\mathcal G' = \mathcal G \setminus \bigcup_{B\in\mathcal X} B$, the elements of the ground set that do not appear in any set of $\mathcal X$. 

If $\mathcal Y\neq\emptyset$, consider the bipartite graph with classes $\mathcal Y$ and $\mathcal G'$, and an edge between $B$ and $x$ if and only if $x\in\B$. Suppose that there is no matching from $\mathcal Y$ to $\mathcal G'$. Then, by Hall's theorem, there exists a non-empty subset $\mathcal S\subseteq\mathcal Y$ such that the number of neighbours of $\mathcal S$ is less than $|\mathcal S|$, or in other words $\left|\mathcal G' \cap \bigcup_{B\in\mathcal S} B\right|<|\mathcal S|$. By construction, $\mathcal S$ and $\mathcal X$ are disjoint, thus $|\mathcal S\cup\mathcal X| = |\mathcal S|+|\mathcal X|$. Moreover, by the definition of $\mathcal G'$, we have that $\left|\bigcup_{B\in\mathcal S\cup\mathcal X} B\right| = \left|\bigcup_{B\in\mathcal X} B\right| +\left|\mathcal G' \cap \bigcup_{B\in\mathcal S} B\right| < |\mathcal S|+|\mathcal X|$, which contradicts the maximality of $\mathcal X$, as $\mathcal S\neq\emptyset$.

Therefore $\mathcal B$ is the disjoint union of $\mathcal X$ and $\mathcal Y$, where  $\left|\bigcup_{B\in X} B\right| < |\mathcal X|$, and there exists a matching (injective function) $f:\mathcal Y\to \mathcal G'$. Note that this is vacously true in the case where $\mathcal Y=\emptyset$.

We will now modify the embedding by replacing the sets in $\mathcal Y$ with the singletons given by the matching, and the maximal elements by the union of the sets they must contain, plus some extra new singletons in order to ensure incomparability where necessary. In particular, we completely throw away all elements that are covered by the maximal sets, but not by any minimal set.

Let $a_1,\dots, a_r\in\mathbb N\setminus \mathcal G$ be $r$ distinct numbers. First, for all $i\in[r]$, we define $C_i' = \{f(B): B\in\mathcal Y, B\subset A_i\} \cup \bigcup_{B\in\mathcal X: B\subset A_i} B$. These are our candidates for the new maximal elements. However, they could be in $\mathcal X$, a singleton, or comparable. So, if $C_i'\in\mathcal X$, or $|C_i'|=1$ or if $C'_i\subseteq C_j'$, for some $i\neq j$, we define $C_i = C_i'\cup \{a_i\}$. Otherwise, we define $C_i=C_i'$.

\begin{claim} The family $\mathcal X \cup \{\{f(B)\}:B\in\mathcal Y\} \cup \{C_i: i\in[r]\}$ is an induced copy of $\mathcal P$, where the maximal elements are $C_i$ for $i\in[r]$, while the rest are the minimal elements.
\end{claim}
\begin{proof}
Consider the function $g:\mathcal P\rightarrow\mathcal X\cup\{\{f(B)\}:B\in\mathcal Y\}\cup\{C_i:i\in[r]\}$, given by $g(A_i)=C_i$, $g(B_i)=B_i$ if $B_i\in\mathcal X$ and $g(B_i)=\{f(B_i)\}$ if $B_i\in\mathcal Y$. We will show that $g$ is a poset isomorphism. In other words, we will show that $\mathcal X\cup\{\{f(B)\}:B\in\mathcal Y\}$ is an antichain of size $k$, $\{C_i:i\in[r]\}$ is an antichain of size $r$ and that they are distinct.  Moreover, $B_i\subset A_j$ if and only if $g(B_i)\subset g(A_j)$.

To begin with, since $\mathcal X\subseteq\mathcal B$, $\mathcal X$ is an antichain. By construction, the image of $f$ is a set of distinct singletons, hence also an antichain. Moreover, these singletons are in $\mathcal G'$, which is disjoint from any $B\in\mathcal X$. Therefore we have that $\mathcal X\cup\{\{f(B)\}:B\in\mathcal Y\}$ is an antichain of size $k$.

Next, suppose that $C_i\subseteq C_j$ for some $i\neq j$. Since by construction we have $C_j\subseteq \mathcal G \cup \{a_j\}$, we get that $a_i\notin C_j$, and consequently $a_i\notin C_i$, thus $C_i=C_i'$. By definition, this means that $C_i'\not\subseteq  C_j'$. This is a contradiction as $a_j\notin C_i$, and so $C_i'=C_i\subseteq C_j\setminus\{a_j\} = C_j'$. Therefore $\{C_i:i\in[r]\}$ is an antichain of size $r$.

Moreover, we cannot have $g(B_i)=C_j$ for any $i\in[k]$ and $j\in[r]$. This is because if $C'_j$ is in $\mathcal X$ or a singleton, we add a completely new singleton to $C'_j$ to make $C_j$. Therefore, the two antichains are disjoint.

Suppose now that $B_i\subset A_j$. Then by construction we have $g(B_i)\subseteq C_j'$ both in the case when $B\in\mathcal X$ or when $B\in\mathcal Y$. Therefore $g(B_i)\subset g(A_j)=C_j$.

Lastly, suppose that there exist $B_i$ and $A_j$ such that $B_i\not\subset A_j$ but $g(B_i)\subset g(A_j)=C_j$. If $B_i\in \mathcal X$, then $B_i\subset C_j$. Since $B_i$ does not contain $a_i$, we must have $B_i\subseteq C'_j=\{f(B): B\in\mathcal Y, B\subset A_j\} \cup \bigcup_{B\in\mathcal X: B\subset A_j}B$. Since $B_i$ is disjoint from $\{f(B): B\in\mathcal Y\}$, we get that $B_i\subseteq\bigcup_{B\in\mathcal X: B\subset A_j}B\subseteq A_j$, a contradiction. If $B_i\in\mathcal Y$, then $f(B_i)\in\mathcal G'$. However, since the only elements of $C_j$ that are in $\mathcal G'$ are $f(B)$ for $B\subset A_j$, and $f$ is an injection, we must have $B_i\subset A_j$, a contradiction. This finishes the claim.
\end{proof}
We now show that this new embedding is still `as low as possible'.
\begin{claim} We have that $\max\{|g(A)|:A\in \mathcal P\} = h^*(\mathcal P)$.
\end{claim}
\begin{proof}
It is enough to show that $|C_i|\leq |A_i|$ for all $i\in [r]$. Since $f(B)\in B$ for all $B\in\mathcal Y$, we clearly have $C_i'\subseteq A_i$. If $C_i' = C_i$, then we are done. Otherwise, $|C_i|=|C_i'|+1$ in which case either $C'_i\in\mathcal X$, $C'_i$ is a singleton or $C_i'\subseteq C_j'$ for some $j\neq i$. If $C'_i\in\mathcal X\subseteq\mathcal B$, then $C'_i\subset A_i$, and so $|C_i|\leq |A_i|$. If $C'_i$ is a singleton, since $A_i$ cannot be a singleton (as $\emptyset\notin\mathcal P$), we still have $|C_i|\leq|A_i|$. Finally, if $C'_i\subseteq C'_j\subseteq A_j$ for some $i\neq j$, then $C'_i\subset A_i$ because otherwise we would have $A_i\subseteq A_j$, a contradiction. Therefore $|C_i|\leq|A_i|$ in this case too, finishing the proof of the claim.
\end{proof}

Putting everything together, we get $$w^*(\mathcal P)\leq \left|\bigcup_{A\in\mathcal P} g(A)\right| \leq \left|\mathrm{Im}f \cup \bigcup_{B\in\mathcal X}B \cup \{a_i:i\in[r]\} \right|\leq|\mathcal Y| + |\mathcal X| + |\mathcal A| = |\mathcal P|.$$
\end{proof}

\section{The general case}
We now turn our attention to an arbitrary poset $\mathcal P$. The strategy is similar to the one in Section 2. Given a copy of $\mathcal P $ embedded in some $Q_n$, we use Hall's theorem to construct a matching from some sets of $\mathcal P$ to a carefully chosen subset of $[n]$. We then modify each set of the poset by keeping the elements that are not in that chosen subset of $[n]$, and adding the singletons, given by the matching, corresponding to the sets that are below it.
\begin{theorem}
\label{maintheorem}
Let $\mathcal P$ be a finite poset. Then $w^*(\mathcal P)\leq |\cP|$.
\end{theorem}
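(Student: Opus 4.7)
My plan is to generalize the Section~2 argument to arbitrary posets by using the same machinery but allowing $\mathcal{X}$ to range over all of $\mathcal{P}$ rather than only the minimal layer. Starting from an embedding of $\mathcal{P}$ inside some $Q_n$ at height at most $h^*(\mathcal{P})$ with ground set $\mathcal{G} = \bigcup_{P \in \mathcal{P}} P$, I pick $\mathcal{X} \subseteq \mathcal{P}$ of maximum size subject to $|\bigcup_{P \in \mathcal{X}} P| < |\mathcal{X}|$ (taking $\mathcal{X} = \emptyset$ if no such subset exists), and write $\mathcal{G}^* = \bigcup_{P \in \mathcal{X}} P$, $\mathcal{G}' = \mathcal{G} \setminus \mathcal{G}^*$ and $\mathcal{Y} = \mathcal{P} \setminus \mathcal{X}$. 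The Hall-condition verification goes through verbatim as in Section~2 (any violating $W \subseteq \mathcal{Y}$ would combine with $\mathcal{X}$ to contradict maximality), producing an injection $f : \mathcal{Y} \to \mathcal{G}'$ with $f(P) \in P$ for every $P \in \mathcal{Y}$.

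For each $P \in \mathcal{P}$ I then define
$$P' = (P \cap \mathcal{G}^*) \cup \{f(R) : R \in \mathcal{Y},\ R \leq P\},$$
so that $P$ keeps its ``bad'' $\mathcal{G}^*$-part and acquires one new singleton for every matched set at or below it. I intend to show that $P \mapsto P'$ is a poset isomorphism onto an induced copy of $\mathcal{P}$ inside $\binom{[w]}{\leq h^*(\mathcal{P})}$ with $w \leq |\mathcal{P}|$. The forward direction $P \leq Q \Rightarrow P' \subseteq Q'$ is immediate from monotonicity of both pieces. For the reverse, if $P \in \mathcal{Y}$ then $f(P) \in P' \cap \mathcal{G}'$ acts as a ``fingerprint'' forcing $P \leq Q$ whenever it appears in $Q'$ (by injectivity of $f$); and if $P \in \mathcal{X}$ then $P \subseteq \mathcal{G}^*$ gives $P = P \cap \mathcal{G}^* \subseteq P'$, so that projecting the inclusion $P' \subseteq Q'$ onto $\mathcal{G}^*$ kills the matching-singleton contribution and yields $P \subseteq Q$, which in turn gives $P \leq Q$ because the original embedding realises the poset order via set inclusion.

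The size control is comparatively routine: disjointness of the two parts of $P'$ together with $f(R) \in R \subseteq P$ for each $R \leq P$ in $\mathcal{Y}$ yields $|P'| \leq |P \cap \mathcal{G}^*| + |P \cap \mathcal{G}'| = |P| \leq h^*(\mathcal{P})$, while the total ground set has size at most $|\mathcal{G}^*| + |\mathcal{Y}| \leq |\mathcal{P}|$, with strict inequality when $\mathcal{X} \neq \emptyset$. I expect the main subtlety to be the $P \in \mathcal{X}$ case of the reverse-direction step. The clean feature of this construction, in contrast to Section~2, is that because every element of $\mathcal{X}$ sits entirely inside $\mathcal{G}^*$, comparisons of $P'$ and $Q'$ along the $\mathcal{G}^*$-coordinate reduce to comparisons of the original $P$ and $Q$ inside the hypercube; this is exactly what eliminates any need for the ``dummy'' singletons $a_i$ introduced in the two-layered proof.
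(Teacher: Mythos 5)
Your proposal is correct and is essentially the paper's own proof of Theorem~\ref{maintheorem}: the same maximal family $\mathcal X$ violating the union bound, the same Hall-based matching $f:\mathcal Y\to\mathcal G'$, the same modified sets $(P\cap\mathcal G^*)\cup\{f(R):R\in\mathcal Y,\ R\le P\}$, and the same two-case verification (fingerprint $f(P)$ for $P\in\mathcal Y$, projection onto $\mathcal G^*$ for $P\in\mathcal X$) together with the identical size and ground-set counts.
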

\begin{proof}
Suppose $\mathcal P$ is embedded in some $Q_n$ such that all sets of this embedding have size at most $h^*(\mathcal P)$. We will call this fixed copy $\mathcal P$ for readabilty purposes.

Let $\mathcal X$ be a subset of $\mathcal P$ of maximal size such that $|\bigcup_{A\in \mathcal X}A| < |\mathcal X|$. If no such subset exists, then we set $\mathcal X = \emptyset$. Let  also $\mathcal Y = \mathcal P\setminus\mathcal X$, $\mathcal G = \bigcup_{A\in \mathcal X}A$ and $\mathcal G' = (\bigcup_{A\in\mathcal P}A)\setminus \mathcal G$. 

We note that regardless of whether $\mathcal X=\emptyset$ or not, $|\mathcal G|\leq |\mathcal X|$.

Suppose first that $\mathcal Y\neq \emptyset$. In this case, we consider the bipartite graph with classes $\mathcal Y$ and $\mathcal G'$ and edges between an $A\in\mathcal Y$ and an $x\in\mathcal G'$ if and only if $x\in A$. Suppose that there is no matching from $\mathcal Y$ to $\mathcal G'$. Then, by Hall's theorem, there exists a non-empty subset $\mathcal S\subseteq \mathcal Y$ such that $\mathcal S$ has less than $|\mathcal S|$ neighbours. In other words, $|\mathcal G'\cap (\bigcup_{A\in \cS}A)| <|\cS|$. By construction, $\mathcal Y$ and $\mathcal X$ are disjoint, thus $\mathcal S$ and $\mathcal X$ are disjoint too. We therefore get that $|\mathcal S\cup\mathcal X| = |\mathcal S| + |\mathcal X|$. Moreover, by the definitions of $\mathcal G$ and $\mathcal G'$, $|\bigcup_{A\in \mathcal S\cup\mathcal X}A| = |\mathcal G| + |\mathcal G'\cap( \bigcup_{A\in \mathcal S}A)| < |\mathcal X| + |\mathcal S|$, contradicting the maximality of $\mathcal X$ since $\mathcal S\neq \emptyset$. Therefore there exists a matching from $\mathcal Y$ to $\mathcal G'$. In other words, there exists an injective function $f:\mathcal Y\to\mathcal G'$ such that $f(A)\in A$ for all $A\in\mathcal Y$. If $\mathcal Y=\emptyset$, then this is vacuously true.

We will now use this matching to modify the embedding of $\mathcal P$. Let $g$ be the function $g:\mathcal P\to Q_n$ such that $$g(A)=(A\cap \mathcal G) \cup \{f(B):B\in \mathcal Y, B\subseteq A\}.$$ Let $\mathcal P'$ be the image of $g$.

\begin{claim2} $\mathcal P'$ is an induced copy of $\mathcal P$.
\end{claim2}
\begin{proof}
It is trivial to see that if $A\subseteq A'$, then $g(A)\subseteq g(A')$. We will show that if $g(A)\subseteq g(A')$, then $A\subseteq A'$, which will show that $g$ is an injective order preserving map from $\mathcal P$ to $\mathcal P'$, finishing the proof of the claim.

Suppose that $g(A)\subseteq g(A')$ for some $A,A'\in\mathcal P$. This means that $(A\cap \mathcal G) \cup \{f(B):B\in \mathcal Y, B\subseteq A\}\subseteq (A'\cap \mathcal G) \cup \{f(B):B\in \mathcal Y, B\subseteq A'\}$. Since the image of $f$ is disjoint from $\mathcal G$ and $f$ is injective, we get that $A\cap \mathcal G \subseteq A'\cap \mathcal G$ and $\{B\in \mathcal Y:B\subseteq A\}\subseteq \{B\in \mathcal Y:B\subseteq A'\}$.

If $A\in \mathcal X$, then $A=A\cap \mathcal G \subseteq A'\cap \mathcal G \subseteq A'$. If $A\in\mathcal Y$, then $A\in \{B\in \mathcal Y:B\subseteq A\}\subseteq \{B\in\mathcal Y, B\subseteq A'\}$, which implies that $A\subseteq A'$, finishing the proof of the claim as explained above.
\end{proof}

By construction we have that $f(A)\in A$ for all $A\in \mathcal P$, and so $g(A)\subseteq A$ for all $A\in\mathcal P$. Consequently, we get $\max \{|A|:A\in\mathcal P'\} \leq \max \{|A|:A\in\mathcal P\} = h^*(\cP)$. Putting everything together, we get

$$
w^*(\cP) \leq\left|\bigcup_{A\in\mathcal P'}A\right| \leq |\mathcal G| + |\text{Im} f| \leq |\mathcal X| + |\mathcal Y| = |\mathcal P|.
$$
\end{proof}
\section{Final remarks}
We would like to mention that, since this question of how big the hypercube-width can be arose in \cite{polynomial} as a central tool in upper bounding the saturation number of an arbitrary poset. 

Given a finite poset $\mathcal P$, we say that a family of sets $\mathcal F
\subseteq\mathcal P([n])$ is \textit{$\mathcal P$-saturated} if it does not contain an induced copy of $\mathcal P$, but $\mathcal F\cup\{A\}$ contains such a copy for all $A\in \mathcal P([n])\setminus\mathcal F$. The size of the smallest such family is called the \textit{induced saturation number of $\mathcal P$}, denoted by $\text{sat}^*(n,\mathcal P)$. Theorem~\ref{maintheorem} gives the following result.
\begin{corollary}
\label{corollary}
Let $\mathcal P$ be a finite poset. Then $\text{sat}^*(n,\mathcal P)\leq 2n^{|\mathcal P|-1}$ for sufficiently large $n$.
\end{corollary}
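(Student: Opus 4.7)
The plan is to combine Theorem~\ref{maintheorem} with the saturation-number machinery developed by Bastide, Groenland, Ivan and Johnston in \cite{polynomial}. There they establish that, for every finite poset $\mathcal P$, the induced saturation number admits a polynomial upper bound whose exponent is controlled by $w^*(\mathcal P)$: namely, $\text{sat}^*(n,\mathcal P) \leq 2 n^{w^*(\mathcal P)-1}$ for sufficiently large $n$. Their proof is constructive: one takes an induced copy of $\mathcal P$ inside $\binom{[w^*(\mathcal P)]}{\leq h^*(\mathcal P)}$ and adjoins to it a carefully chosen collection of sets over $[n]$, of size polynomial in $n$ with exponent $w^*(\mathcal P)-1$, to produce a family that is induced-$\mathcal P$-saturated.

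Given this result as a black box, the corollary is immediate: substituting the bound $w^*(\mathcal P) \leq |\mathcal P|$ from Theorem~\ref{maintheorem} into the exponent yields $\text{sat}^*(n,\mathcal P) \leq 2 n^{|\mathcal P|-1}$ for sufficiently large $n$, as required.

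Since the substantive work is entirely upstream, no genuine obstacle arises. The only thing worth checking when writing out the argument is that the multiplicative constant in the \cite{polynomial} bound is universal (so that plugging in our bound on $w^*$ only changes the exponent and not the leading constant), and that the threshold "$n$ sufficiently large" absorbs the lower-order binomial terms $\binom{n}{k}$ with $k < w^*(\mathcal P)-1$. Both are routine estimates: $\binom{n}{k} \leq n^k$, and finitely many such terms of degree at most $|\mathcal P|-1$ are dominated by $2n^{|\mathcal P|-1}$ eventually. Hence no new idea is needed beyond invoking Theorem~\ref{maintheorem}.
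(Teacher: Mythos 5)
Your proposal matches the paper exactly: the corollary is obtained by plugging the bound $w^*(\mathcal P)\leq|\mathcal P|$ of Theorem~\ref{maintheorem} into the inequality $\text{sat}^*(n,\mathcal P)\leq 2n^{w^*(\mathcal P)-1}$ established in \cite{polynomial}, and the paper offers no further argument. Your extra remarks about the constant and the threshold are harmless but not needed, since increasing the exponent from $w^*(\mathcal P)-1$ to $|\mathcal P|-1$ only weakens the bound for $n\geq 1$.
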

Determining the saturation number, even for specific posets, is nowhere near a trivial task. In fact, these numbers have been determined for just a few posets -- see \cite{ferrara2017saturation, ivan2020saturationbutterflyposet, keszegh2021induced, bastide2024exact, diamondlinear, N}. Furthermore, they are all either bounded or linear in $n$, which is in fact the central conjecture of the field \cite{keszegh2021induced}. 

This result gets one step closer to it, lowering the previous upper bound of $2n^{|\mathcal P|^2/4-1}$ in \cite{polynomial}. However, since there exist posets such that $w^*(\mathcal P)=|\mathcal P|$, and the work in \cite{polynomial} is optimized to show that $\text{sat}^*(n,\mathcal P)\leq 2n^{w^*(\mathcal P)-1}$, Corollary~\ref{corollary} is the best upper bound that can be achieved with these techniques. Therefore, in order to show that the saturation number for any poset is at most linear, substantially new methods need to be developed.

Nevertheless, the question of how `low' a poset can be embedded in a hypercube is in itself an interesting one, regardless of its connection to poset saturation. There are two notions here that are closely linked, namely the hypercube-height and the hypercube-width. Whilst one might superficially think that the hypercube-height is understood, in the sense that $0\leq h^*(\mathcal P)\leq |\mathcal P|-1$ and both bounds can be achieved (by the single point poset and the chain, respectively), we do not know how to compute the hypercube-height of an arbitrary poset. In other words, what structural feature makes a poset have large hypercube-height? With this in mind, we ask the following natural question.
\begin{question}
For which finite posets $\mathcal P$ do we have that $h^*(\mathcal P)=|\mathcal P|-1$?
\end{question}
Moreover, the hypercube-height and the hypercube-width are very closely linked -- seemingly one cannot understand one without the other. This prompts us to asking the following question.
\begin{question}
Is there a tight inequality between $h^*(\mathcal P)$ and $w^*(\mathcal P)$ that holds for any finite poset $\mathcal P$, and only involves absolute constants, $h^*(\mathcal P)$ and $w^*(\mathcal P)$?
\end{question}
Finally, we have shown that $w^*(\mathcal P)\leq |\mathcal P|$ for any finite poset $\mathcal P$, and equality is achieved. The simplest example when equality is achieved is when $\mathcal P$ is an antichain. It is not hard to construct other examples, but what makes a poset have this property? We therefore ask the following.
\begin{question}
For which finite posets $\mathcal P$ do we have that $w^*(\mathcal P)=|\mathcal P|$?
\end{question}
\vspace{1.5em}
\noindent{\textbf{Acknowledgment.} The first author would like to thank G-Research for generously funding his stay in Cambridge while undertaking this project.}
\bibliographystyle{amsplain}
\bibliography{references}
\Addresses
\end{document}